\documentclass{article}
\usepackage{graphicx}
\usepackage{amsmath}
\usepackage{amssymb}
\usepackage{amscd}
\usepackage{mathrsfs}

\pagestyle{plain}

\newtheorem{theorem}{Theorem}

\newtheorem{theoremc}{Theorem}

\newtheorem{rk}[theoremc]{Remark}

\newtheorem{prop}[theorem]{Proposition}
\newenvironment{proof}[1][Proof]{\textbf{#1.} }{\qed \vspace{5pt}}
\newcommand\bib[1]{\bibitem[#1]{#1}}

\newcommand\qed{\phantom{\underline{y}}\hfill\hfill$\square$}
\newcommand{\comm}[1]{}
\newcommand\Cc{\let\mathcal\mathscr\mathcal C}

\renewcommand\a{\alpha}

\newcommand\C{{\mathbb C}}

\newcommand\D{{\mathcal D}}

\newcommand\E{{\mathcal E}}

\renewcommand\l{\lambda}

\renewcommand\O{\Omega}
\newcommand\oo{\omega}
\newcommand\op[1]{\mathop{\rm #1}\nolimits}
\newcommand\ot{\otimes}
\newcommand\p{\partial}

\newcommand\R{{\mathbb R}}
\renewcommand\t{\tau}

\newcommand\ti{\tilde}

\newcommand\vp{\varphi}
\newcommand\we{\wedge}
\newcommand\x{\xi}

\newcommand\z{\sigma}
\newcommand\Z{{\mathbb Z}}

\begin{document}

 \title{Point classification of 2nd order ODEs: \\
 Tresse classification revisited and beyond}
 \author{Boris Kruglikov}
 \date{}
 \maketitle

 \begin{abstract}
In 1896 Tresse gave a complete description of relative differential
invariants for the pseudogroup action of point transformations on
the 2nd order ODEs. The purpose of this paper is to review, in light
of modern geometric approach to PDEs, this classification and also
discuss the role of absolute invariants and
the equivalence problem.%
 \footnote{MSC numbers: 34C14, 58H05; 58A20, 35A30\\ Keywords:
differential invariants, invariant differentiations, Tresse derivatives, PDEs.}%
 \end{abstract}

\section*{Introduction}

Second order scalar ordinary differential equations have been the
classical target of investigations and source of inspiration for
complicated physical models. Under contact transformations all these
equations are locally equivalent, but to find such a transformation
for a pair of ODEs is the same hard problem as to find a general
solution, which as we know from Ricatti equations is not always
possible.

Most integration methods for second order ODEs are related to another pseudogroup action --
point transformations, which do not act transitively on the space of all such equations.
All linear 2nd order ODEs are point equivalent.

S.\,Lie noticed that ODEs linearizable via point transformations
have necessarily cubic nonlinearity in the first derivatives and
described a general test to construct this linearization map
\cite{Lie$_2$}. Later R.\,Liouville found precise conditions for
linearization \cite{Lio}. But it was A.\,Tresse who first wrote the
complete set of differential invariants for general 2nd order ODEs.

The paper \cite{Tr$_2$} is a milestone in the geometric theory of
differential equations, but mostly one result (linearization of
S.Lie-R.Liouville-A.Tresse) from the manuscript is used nowadays.
In this note we would like to revise the Tresse classification in modern terminology
and provide some alternative formulations and proofs. We make relation to the equivalence
problem more precise and also compare this approach with E.Cartan's equivalence method.

This classification can illustrate the finite representation theorem
for differential invariants algebra, also known as Lie-Tresse
theorem. The latter in the ascending degree of generality was proven
in different sources \cite{Lie$_1$,Tr$_1$,Ov,Ku,Ol,KL$_1$}. In
particular, the latter reference contains the full generality
statement, when the pseudogroup acts on a system of differential
equations $\E\subset J^l(\pi)$ (under regularity assumption, see
also \cite{SS}). We refer to it for details and further references
and we also cite \cite{KLV,KL$_2$} as a source of basic notations,
methods and results.

The structure of the paper is the following. In the first section we
provide a short introduction to scalar differential invariants of a
pseudogroup action and recall what the algebra of relative
differential invariants is. In Section 2 we review the results of
Tresse, confirming his formulae with independent computer
calculation. In Section 3 we complete Tresse's paper by describing
the algebra of absolute invariants and proving the equivalence
theorem (in \cite{Tr$_2$} this was formulated via relative
invariants, which makes unnecessary complications with homogeneity,
and only necessity of the criterion was explained). In Section 4 we
discuss the non-generic 2nd order equations, which contain in
particular linearizable ODEs. Section 5 is devoted to discussion of
symmetric ODEs.

Finally in Appendix (written jointly with V.Lychagin) we provide
another approach to the equivalence problem, based on a reduction of
an infinite-dimen\-si\-onal pseudogroup action to a Lie group
action.

\section{Scalar differential invariants}

We refer to the basics of pseudogroup actions to \cite{Ku,KL$_2$},
but recall the relevant theory about differential invariants (see
also \cite{Tr$_1$,Ol}). We'll be concerned with the infinite Lie
pseudogroup $G=\op{Diff}_\text{loc}(\R^2,\R^2)$ with the
corresponding Lie algebras sheaf (LAS)
$\mathfrak{g}=\mathfrak{D}_\text{loc}(\R^2)$ of vector fields.

The action of $G$ has the natural lift to an action on the space
$J^\infty\pi$ for an appropriate\footnote{In this paper
$\pi=M\times\R$ is a trivial 1-dimensional bundle over
$M\simeq\R^3$, so $J^k\pi=J^kM$.} vector bundle $\pi$, provided we
specify a Lie algebras homomorphism
$\mathfrak{g}\to\mathfrak{D}_\text{loc}(J^0\pi)$. Then we can
restrict to the action of formal LAS $J^\infty(\R^2,\R^2)$.

A function $I\in C^\infty(J^\infty\pi)$ (this means that $I$ is a
function on a finite jet space $J^k\pi$ for some $k>1$) is called a
(scalar absolute) differential invariant if it is constant
along the orbits of the lift of the action of $G$ to $J^k\pi$.

For connected groups $G$ we have an
equivalent formulation: $I$ is an (absolute) differential invariant
if the Lie derivative vanishes $L_{\hat X}(I)=0$ for
all vector fields $X$ from the lifted action of the Lie algebra
$\mathfrak{g}=\op{Lie}(G)$.

Note that often functions $I$ are defined only locally near families
of orbits. Alternatively we should allow $I$ to have meromorphic
behavior over smooth functions (but we'll be writing though about
local functions in what follows, which is a kind of micro-locality,
i.e. locality in finite jet-spaces).

The space $\mathcal{I}=\{I\}$ forms an algebra with respect to usual
algebraic operations of linear combinations over $\R$ and
multiplication and also the composition $I_1,\dots,I_s\mapsto
I=F(I_1,\dots,I_s)$ for any $F\in C^\infty_\text{loc}(\R^s,\R)$,
$s=1,2,\dots$ any finite number. However even with these operations
the algebra $\mathcal{I}$ is usually not locally finitely generated. Indeed,
the subalgebras $\mathcal{I}_k\subset\mathcal{I}$ of order $k$
differential invariants are finitely generated on non-singular
strata with respect to the above operations, but their injective
limit $\mathcal{I}$ is not.

However finite-dimensionality is restored if we add invariant
derivatives, i.e. $\Cc$-vector fields
$\vartheta\in C^\infty(J^\infty\pi)\ot_{C^\infty(M)}\mathfrak{D}(M)$
commuting with the $G$-action on the bundle $\pi$.
These operators map differential invariants to differential invariants
$\vartheta:\mathcal{I}_k\to\mathcal{I}_{k+1}$.

Lie-Tresse theorem claims that the algebra of differential
invariants $\mathcal{I}$ is finitely generated with respect to
algebraic-functional operations and invariant derivatives.

A helpful tool on the practical way to calculate algebra
$\mathcal{I}$ of invariants are relative invariants, because they
often occur on the lower jet-level than absolute invariants. A
function $F\in C^\infty(J^\infty\pi)$ is called a relative scalar
differential invariant if the action of pseudogroup $G$ writes
 $$
g^*F=\mu(g)\cdot F
 $$
for a certain weight, which is a smooth function $\mu:G\to
C^\infty(J^\infty\pi)$, satisfying the axioms of multiplier
representation
 $$
\mu(g\cdot h)=h^*\mu(g)\cdot\mu(h),\quad \mu(e)=1.
 $$

The corresponding infinitesimal analog for an action of LAS $\mathfrak{g}$
is given via a smooth map (the multiplier representation is denoted by the same letter)
$\mu: \mathfrak{g}\to \mathfrak{D}(J^\infty\pi)$,
which satisfies the relations
 $$
\mu_{[X,Y]}=L_{\hat X}(\mu_Y)-L_{\hat Y}(\mu_X),\quad \forall\,
X,Y\in \mathfrak{g},
 $$
Then a relative scalar invariant is a function $F\in C^\infty(J^\infty\pi)$
such that $L_{\hat X}I=\mu_{X}\cdot I$.
In other words (in both cases) the equation $F=0$ is invariant under the action.

Let $\mathfrak{M}=\{\mu_X\}$ be the space of admissible weights\footnote{It is
given via a certain cohomology theory, which will be considered elsewhere.}.
Denote by $\mathcal{R}^\mu$ the space of scalar relative differential invariants
of weight $\mu$. Then
 $$
\mathcal{R}=\bigcup_{\mu\in\mathfrak{M}}\mathcal{R}^\mu
 $$
is a $\mathfrak{M}$-graded module over the algebra of absolute scalar differential invariants
$\mathcal{I}=\mathcal{R}^0$ corresponding to the weight $\mu=0$ for the LAS action
($\mu=1$ for the pseudogroup action).

The space $\mathfrak{M}$ of weights (multipliers) is always a group, but we can transform it into
a $\mathbf{k}$-vector space ($\mathbf{k}=\mathbb{Q}$, $\R$ or $\mathbb{C}$)
by taking tensor product $\mathfrak{M}\otimes\mathbf{k}$ and
considering (sometimes formal) combinations $(I_1)^{\a_1}\cdots(I_s)^{\a_s}$. Then we have:
 $$
\mathcal{R}^\mu\cdot \mathcal{R}^{\bar\mu}\subset \mathcal{R}^{\mu+\bar\mu},\quad
(\mathcal{R}^\mu)^\a\subset\mathcal{R}^{\a\cdot\mu}.
 $$


\section{Tresse classification revisited}

We start by re-phrasing the main results of Tresse
classification\footnote{We use different notations $p$ instead of $z$,
$u$ instead of $\omega$ etc, but this is not crucial.}.

\subsection{Relative differential invariants of 2nd order
ODEs}\label{S21}

The point transformation LAS $\mathfrak{D}_\text{loc}(J^0\R)$, with $J^0\R(x)=\R^2(x,y)$,
equals $\mathfrak{g}=\{\xi_0=a\p_x+b\p_y:a=a(x,y),b=b(x,y)\}$ and it
prolongs to the subalgebra
 \begin{multline*}
\mathfrak{g}_2=\{\xi=a\p_x+b\p_y+A\p_p+B\p_u\}\subset \mathfrak{D}_\text{loc}(J^2\R),\quad
J^2\R=\R^4(x,y,p,u),\\
A=b_x-(a_x-b_y)p-a_yp^2,\ B=B_0+uB_1,\\
B_0=b_{xx}-(a_x-2b_y)_xp-(2a_x-b_y)_yp^2-a_{yy}p^3,\
B_1=-(2a_x-b_y)-3a_yp
 \end{multline*}
where we denote $p=y'$, $u=y''$ the jet coordinates.

Using the notations $D_x=\p_x+p\,\p_y$, $\vp=(dy-p\,dx)(a\p_x+b\p_y)=b-p\,a$ (we'll see soon
these show up naturally), these expressions can be rewritten as
 $$
A=D_x(\vp),\ B_0=D_x^2(\vp),\ B_1=\p_y(\vp)-2D_x(a)
 $$

Thus the LAS $\mathfrak{h}=\mathfrak{g}_2\subset\mathfrak{D}_\text{loc}(J^0\R^3(x,y,p))$
being given we represent a second order ODE as a surface $u=f(x,y,p)$ in
$J^0\R^3(x,y,p)=\R^4(x,y,p,u)$ and $k^\text{th}$ order differential invariants of this ODE are
invariant functions $I\in C^\infty_\text{loc}(J^k\R^3)$ of the prolongation
 \begin{multline*}
\mathfrak{h}_k=\{\hat\xi=a\,\D_x+b\,\D_y+A\,\D_p+\sum_{|\z|\le k}\D_\z^{(k)}(f)\,\p_{u_\z}\}\subset
\mathfrak{D}(J^k\R^3),\\
f=B_0+B_1u-a\,u_x-b\,u_y-A\,u_p\ :\qquad\hat\xi(I)=0.
 \end{multline*}
Here $\D_\z^{(k)}=\D_\z|_{J^k}$ with $\D_\z=\D_x^l\D_y^m\D_p^n$ for
$\z=(l\cdot 1_x+m\cdot 1_y+n\cdot 1_p)$, so that
 \begin{multline*}
\D_\z(f)=\D_\z(B_0)+\sum\frac{|\t|!}{\t!}\Bigl(\D_\t(B_1)u_{\z-\t}
-D_\t(a)u_{\z-\t+1_x}\\
-\D_\t(b)u_{\z-\t+1_y}-\D_\t(A)u_{\z-\t+1_p}\Bigr).
 \end{multline*}
In the above formula we used the usual partial derivatives $\p_x$ etc in the total derivative
operators $\D_\z$ etc. All these operators commute.

It is more convenient, following Tresse, to use the operator $D_x=\p_x+p\,\p_y$
on the base instead and to form the corresponding total derivative
$\hat\D_x=\D_x+p\D_y$. These operators will no longer commute and we need a better notation
for the corresponding non-holonomic partial derivatives.


Denote $u^k_{lm}=\hat\D_x^l\D_y^m\D_p^k(u)$, which equals $u_{lmk}$ mod
(lower order terms).
The first relative invariants calculated by Tresse have order 4 and are:
 \begin{multline*}
I=u^4,\quad H=\\
\ \ u_{20}^2-4u_{11}^1+6u_{02}+u(2u_{10}^3-3u_{01}^2)-u^1(u_{10}^2-4u_{01}^1)
+u^3u_{10}-3u^2u_{01}+u\cdot u\cdot u^4.
 \end{multline*}
In this case the weights form two-dimensional lattice and the relative invariants are
 $$
\mathcal{R}^{r,s}=\{\psi\in C^\infty(J^\infty\R^3):\hat\xi(\psi)=-(rD_x(a)+s\p_y(\vp))\psi\}
 $$
Note that $\hat\xi(\psi)=-(w\,C^w_\xi+q\,C^q_\xi)\psi$ for $w=r$,
$q=s-r$ (weight and quality in Tresse terminology). Here the
coefficients can be expressed as operators of $\xi_0=a\p_x+b\p_y$
and $\xi_1=a\p_x+b\p_y+A\p_p$:
 $$
C^w_\xi=a_x+b_y=\op{div}_{\oo_0}(\xi_0) \text{ and }
C^q_\xi=\p_y(\vp)= \tfrac12\op{div}_{\Omega_0}(\xi_1)
 $$
with $\oo_0=dx\we dy$ the volume form on $J^0\R$ and $\Omega_0=-\oo\we d\oo$ on $J^1\R$, where
$\oo=dy-p\,dx$ is the standard contact form of $J^1\R$. These two
form the base of all weights\footnote{This is a result from a joint discussion with V.Lychagin.
It is important since in Tresse \cite{Tr$_2$} this is an ad-hoc result, based on
the straightforward calculations, but not fully justified.
More details will appear in a separate publication.\label{rkBKVL}}.

There are relative invariant differentiations\footnote{Note that they are differential operators
of the 1st order, obtained from the base derivations via an invariant connection.}
(differential parameters in the classical language):
 \begin{gather*}
\Delta_p=\D_p+(r-s)\frac{u^5}{5u^4}: \mathcal{R}^{r,s}\to\mathcal{R}^{r-1,s+1},\\
\Delta_x=\hat\D_x+u\,\Delta_p+\Bigl((3r+2s)\Bigl(u^1+\frac{3u\,u^5}{5u^4}\Bigr)+(2r+s)\frac{u^4_{10}}{u^4}\Bigr)
: \mathcal{R}^{r,s}\to\mathcal{R}^{r+1,s},\\
\Delta_y=\D_y+\frac{u^5}{5u^4}\,\Delta_x+\Bigl(2\,u^1+\frac{u_{10}^4+u\,u^5}{u^4}\Bigr)\Delta_p+
\Bigl((r+2s)\frac{u_{01}^4}{4u^4}+\qquad\\
\qquad+(3r+2s)\Bigl(\frac{u^2}8+\frac3{20}\frac{u^5(u_{10}^4+u\,u^5+2u^1u^4)}{u^4u^4}\Bigr)
\Bigr): \mathcal{R}^{r,s}\to\mathcal{R}^{r,s+1}.
 \end{gather*}

 \begin{theorem}{\rm\cite{Tr$_2$}}
The space of relative differential invariants $\mathcal{R}$ is generated by
the invariant $H$ and differentiations $\Delta_x,\Delta_y,\Delta_p$ on the generic stratum.
 \end{theorem}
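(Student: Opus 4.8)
The plan is to leverage the Lie--Tresse finiteness theorem, already available, so that the only thing left is to certify that the \emph{explicit} data $H,\Delta_x,\Delta_y,\Delta_p$ constitute a complete generating set on the open dense stratum $\{u^4\neq0\}$ where the three differentiations are defined and the prolonged action of $\mathfrak h_k$ has locally maximal orbits. Throughout I work on this stratum. The argument splits into a dimension count fixing how many independent invariants must occur at each order, a symbol computation showing that the iterated derivatives of $H$ realize exactly that many, and an induction on the order closing the generation statement; the relative/graded bookkeeping and the reappearance of $I=u^4$ are treated at the end.

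First I would compute, order by order, the number of functionally independent invariants. The $k$-jet space of the surface $u=f(x,y,p)$ has $\dim J^k\R^3=3+\binom{k+3}{3}$, its pure order-$k$ part being spanned by the $u^k_{lm}$ with $l+m+k$ fixed. Against this I evaluate the generic orbit dimension of $\mathfrak h_k$, i.e. the rank of the linear map sending the jet of the pair $(a,b)$ to the coefficients of $\hat\xi$; the difference is the number of independent invariants of order $\le k$, and its increments give the count of genuinely new invariants at each order. The point to establish is that for $k$ large this increment stabilizes and coincides with the number of new symmetric monomials of the prescribed degree in the three commuting principal symbols of $\Delta_x,\Delta_y,\Delta_p$ applied to the symbol of $H$ (up to the fixed additive contribution of $I$ and its derivatives). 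Verifying this stabilization --- equivalently, that the symbol of the prolonged action has the predicted Hilbert function and that orbit dimension grows by the complementary amount --- is the quantitative heart of the proof.

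Next I pass to principal symbols. Writing $\hat\D_x,\D_y,\D_p$ for the leading parts of $\Delta_x,\Delta_y,\Delta_p$ on the top jet variables, the iterated derivative $\Delta_{i_1}\!\cdots\Delta_{i_j}H$ has top-order symbol a degree-$j$ polynomial in these three commuting derivations applied to the symbol of $H$; using Tresse's explicit formulae (and the independent computer check) I would show these symbols are functionally independent and span the space of new top-order invariants, so that at every order the $\Delta$-iterates of $H$ saturate the count from the first step. Functional independence at the bottom and this saturation drive an induction on the order: any invariant of order $\le k$ is a function of $H$ and its $\Delta$-derivatives because each new pure-order invariant is matched one-to-one by a new iterate. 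The grading is handled by the shifts $\Delta_x,\Delta_y,\Delta_p\colon(r,s)\mapsto(r+1,s),(r,s+1),(r-1,s+1)$, which fill out the admissible sublattice of $\mathfrak M$ and realize $\mathcal R$ as the claimed module over $\mathcal I=\mathcal R^{0}$; forming equal-weight ratios recovers the absolute invariants. Finally, $I=u^4$ is not produced by raising the order but enters through the coefficients of the $\Delta$'s, which are the logarithmic derivatives $u^5/u^4=\D_p\log I$, $u^4_{10}/u^4=\hat\D_x\log I$, $u^4_{01}/u^4=\D_y\log I$, and through the structure functions $c^k_{ij}$ in $[\Delta_i,\Delta_j]=\sum_k c^k_{ij}\Delta_k$, so that $I$ is recovered from the frame itself and needs no separate listing.

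The main obstacle is the symbolic surjectivity of the previous paragraph: one must rule out \emph{any} extra order-$k$ invariant beyond the stable count and exclude unexpected syzygies among $\Delta_x,\Delta_y,\Delta_p$ other than the forced commutator relations. This is precisely the step where the explicit Tresse formulae, together with the independent computer verification, are indispensable, since a purely abstract Lie--Tresse argument guarantees only the existence of some finite generating set and not that this particular economical one suffices.
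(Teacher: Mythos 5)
Your proposal follows essentially the same route as the paper: the paper's justification consists of precisely the orbit-codimension/stabilizer count (\S\ref{S31}, which the paper states gives a rigorous proof of the table in \S\ref{S22}), the computer-verified explicit Tresse formulae showing that the $\Delta$-iterates of $H$ supply the right number of independent invariants in each pure order, and the commutator syzygies of \S\ref{S22} by which $I$ and the $\Omega^k_{ij}$ are restored from $H$ and the invariant differentiations. The two arguments coincide in substance, including your closing point that $I$ needs no separate listing because it is recovered from the structure functions of the frame.
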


Notice that the latter two 1st order $\Cc$-differential operators have the form:
 \begin{gather*}
\Delta_x=\D_x+p\,\D_y+u\,\D_p+r\Bigl(3u^1+2\frac{u\,u^5+u^4_{10}}{u^4}\Bigr)+
s\Bigl(2u^1+\frac{u\,u^5+u^4_{10}}{u^4}\Bigr),\\
\Delta_y=\frac{u^5}{5u^4}\,\D_x
+\Bigl(1+p\frac{u^5}{5u^4}\Bigr)\,\D_y+\Bigl(2\,u^1+\frac{5u_{10}^4+6u\,u^5}{u^4}\Bigr)\D_p+
r\Bigl(\frac{3u^2}8+\frac{u_{01}^4}{4u^4}\qquad\\
+\frac{19u^1u^5}{10u^4}
+\frac{21(u\,u^5+u_{01}^4)u^5}{20\,u^4\cdot u^4}\Bigr)+
s\Bigl(\frac{u^2}4+\frac{u_{01}^4}{2u^4}+\frac{3u^1u^5}{5u^4}
+\frac{3(u\,u^5+u_{01}^4)u^5}{10\,u^4\cdot u^4}\Bigr),
 \end{gather*}
and so $\Delta_x,\Delta_y,\Delta_p$ are linearly independent everywhere outside $I=0$.

\subsection{Specifications}\label{S22}

Several remarks are noteworthy in relation with the theorem:

{\bf 1.} The number of basic relative differential invariants of pure order $k$ is given
in the following table
 $$
\begin{array}{ccccccccccccc}
k: & 0 & 1 & 2 & 3 & 4 & 5 & 6 & 7 & 8& \dots & k & \dots\\
\#: & 0 & 0 & 0 & 0 & 2 & 3 & 11 & 17 & 24 & \dots & \frac12(k^2-k-8)
\end{array}
 $$
The generators in order 4 are $I\in\mathcal{R}^{-2,3}$ and $H\in\mathcal{R}^{2,1}$; in order 5
$H_{10}=\Delta_x(H)\in\mathcal{R}^{3,1}$, $H_{01}=\Delta_y(H)\in\mathcal{R}^{2,2}$ and
$K=\Delta_p(H)\in\mathcal{R}^{1,2}$; in order 6
are\footnote{We let $H_{ij}=\Delta^i_x\Delta^j_yH$ and
$K_{ij}=\Delta^i_x\Delta^j_yK$, though in \cite{Tr$_2$} there is a difference between
$\Delta_xK$ and $K_{10}$, $\Delta_yK$ and $K_{01}$. Since this only involves a linear
transformation, this is possible.}
$(H_{20},H_{11},H_{02})\in\mathcal{R}^{4,1}\oplus\mathcal{R}^{3,2}\oplus\mathcal{R}^{2,3}$,
$(K_{10},K_{01})\in\mathcal{R}^{2,2}\oplus\mathcal{R}^{1,3}$ and
$\Omega_{ij}^l=u_{ij}^l+\text{(lower terms for certain order on monomials)}\in\mathcal{R}^{i+2-l,j+l-1}$,
$\op{deg}\Omega_{ij}^l=i+j+l=6$, $l>3$:
 $$
\begin{array}{cccccccccccc}
\text{order }k & \text{basic relative differential invariants}\\
4 & I,\ H\\
5 & H_{10},\ H_{01},\ K\\
6 & H_{20},\ H_{11},\ H_{02},\ K_{10},\ K_{01},\ \Omega_{20}^4,\ \Omega_{11}^4,\ \Omega_{02}^4,\
\Omega_{10}^5,\ \Omega_{01}^5,\ \Omega^6
\end{array}
 $$
Thus in ascending order $k$, we must add the generators $I,H$ and then $\Omega_{ij}^{6-i-j}$, $i+j\le2$
(one encounters the relations $\Delta_x(I)=\Delta_y(I)=\Delta_p(I)=0$).
Invariants of order $k>6$ are obtained via invariant derivations from the lower order.

{\bf 2.} The theorem as formulated gives only generators. The relations (differential syzygies)
are the following (also contained in \cite{Tr$_2$}):
 \begin{gather*}
[\Delta_p,\Delta_x]=\Delta_y+\frac{3(3r+2s)}5\frac{\Omega^5_{10}}I\\
[\Delta_p,\Delta_y]=\frac{\Omega^6}{5I}\Delta_x+\frac{\Omega^5_{10}}{I}\Delta_p
-\frac{3(3r+2s)}{20}\frac{\Omega^5_{01}}I\\
[\Delta_x,\Delta_y]=\frac{\Omega^5_{10}}{5I}\Delta_x+\frac{\Omega^4_{20}}{I}\Delta_p
-\frac{3(3r+2s)}4\frac{\Omega^4_{11}}I
 \end{gather*}
together with the following relations for coefficients-invariants
(the first of which is just the application of the above commutator relation)
 \begin{gather*}
\Omega^5_{10}=\frac{5I}{24H}([\Delta_p,\Delta_x]H-\Delta_yH),\qquad
\Omega^5_{01}=\frac49(\Delta_p\Omega^5_{10}-\Delta_x\Omega^6),\\
\Omega^4_{20}=\Delta_p^2H-\frac{\Omega^6}{5I}H,\qquad
\Omega^4_{11}=\frac43(\Delta_p\Omega^4_{20}-\Delta_x\Omega^5_{10}).
 \end{gather*}
It is important that the relation for the last additional invariant of order 6
 $$
\Omega^4_{02}=\frac45(\Delta_y\Omega^5_{10}-\Delta_x\Omega^5_{01}
+\frac{5\Omega^4_{20}\Omega^6+\Omega^5_{10}\Omega^5_{01}}{5I})
 $$
can be considered as definition, while first additional
invariant\footnote{This invariant is important with another approach, see Appendix.}
of order 6
 $$
\Omega^6=u^6-\frac65 \frac{u^5\cdot u^5}{u^4}
 $$
can be obtained from a higher relation via application of the relation
for $[\Delta_p,\Delta_y]$ to $H$ and $K$.

Thus we see that involving syzygy of higher order invariants
(prolongation-projection) we can restore the invariants $I,\O^k_{ij}$ from
$H$ and invariant differentiations $\Delta_j$, as the theorem claims.

{\bf 3.} The theorem specifies the relative invariants only on the
generic stratum. If we take the minimal number of generators
$(H,\Delta_x,\Delta_y,\Delta_p)$, then this stratum is specified by
a number of non-degeneracy conditions of high order.

However if we take more generators $(I,H,\Omega^6,\Delta_x,\Delta_y,\Delta_p)$,
or the collection of basic invariants
$(I,H,\Omega^6,\Omega^5_{10},\dots,\Omega^4_{02},\Delta_x,\Delta_y,\Delta_p)$
 for the completeness in ascending order $k$,
then this condition is very easy: just $I\ne0$.

Notice that the condition $I=0$ is important, since it describes the
singular stratum (see however \S\ref{S42} where this case is
handled).


\section{Classification of 2nd order ODEs}

While a complete classification of relative differential invariants
for 2nd order scalar ODEs was achieved by Tresse, absolute
invariants are not described in \cite{Tr$_2$}. They however can be
easily deduced.

\subsection{Dimensional count}\label{S31}

Let us at first count the number of absolute invariants on a generic
stratum\footnote{This count is independent of Tresse argumentation,
and so together with footnote$^\text{\ref{rkBKVL}}$ it provides a
rigorous proof of the table in \S\ref{S22}.}. This number equals the
codimension of a generic orbit in the corresponding jet-space.

Denote by $\mathcal{O}_k$ the orbit through a generic point in $J^k\R^3(x,y,p)$ of the
pseudogroup of point transformations. Tangent to it is determined by the corresponding LAS
and so we can calculate codimension of the orbit. Indeed, denoting by $\op{St}_k$ the stabilizer
of the LAS $\mathfrak{h}_k$ at the origin we get
 $$
\op{dim}\mathcal{O}_k= \op{codim}\op{St}_k.
 $$
To calculate the stabilizer we should adjust the normal form of the
equation at the origin via a point transformation. This can be done
via a projective configuration (Desargues-type) theorem of \cite{A}
(\S1.6): any 2nd order ODE $y''=u(x,y,p)$, $p=y'$, can be
transformed near a given point to
 $$
y''=\alpha(x)y^2+o(|y|^3+|p|^3).
 $$
Denote by $\mathfrak{m}$ the maximal ideal at the given point
(so $\mathfrak{m}^k$ is the space of functions vanishing to order k).
Then we can suppose that at a given point
 $$
u,u_x,u_y,u_p,u_{xx},u_{xy},u_{xp},u_{yp},u_{pp}\in\mathfrak{m}.
 $$
Therefore the stabilizer $\op{St}_k$ is given by the union of the following conditions
on the coefficients of $\hat\xi\in\mathfrak{h}_k$
(equivalently on coefficients of $\xi_0\in\mathfrak{g}$)
 \begin{gather*}
a\in\mathfrak{m}^{k-2},\ a_{yy}\in\mathfrak{m}^{k-3},\
b\in\mathfrak{m}^{k-1},\ b_{xx}\in\mathfrak{m}^k,\\
a_x\in\mathfrak{m}^{k-2},\ (2a_x-b_y)_y\in\mathfrak{m}^{k-2},\
(a_x-2b_y)_x\in\mathfrak{m}^{k-1}.
 \end{gather*}
Thus the Taylor expansion of $a=a(x,y)$ can contain only the following monomials
 $$
\{x^iy^j:i+j\le k-1\},\ \{x^iy^{k-i}: i>1\},\ \{x^iy^{k+1-i}: i>2\}
 $$
and the allowed monomials for $b=b(x,y)$ are
 $$
\{x^iy^j:i+j\le k\},\ \{x^iy^{k+1-i}: i\ge 1\},\ \{x^iy^{k+2-i}: i\ge 2\}.
 $$
This yields that $\op{codim}(\op{St}_k)$ equals:
 $$
\op{dim}\bigl(\mathbb{C}[x,y]^2/\op{St}_k\bigr)=
\frac{k(k+1)}2+2(k-1)+\frac{(k+1)(k+2)}2+2(k+1)=k^2+6k+1
 $$
and so the number $\imath\!\imath_k$ of the basic differential invariants of order $\le k$ is equal to
 \begin{multline*}
\imath\!\imath_k=\op{codim}\mathcal{O}_k=\op{dim}J^k\R^3-\op{dim}\mathcal{O}_k\\
=3+\frac{(k+1)(k+2)(k+3)}6-(k^2+6k+1)=\frac{k^3-25k+18}6.
 \end{multline*}

As this formula indicates for $k\le4$ the generic orbit is open, so that such stratum
has no absolute invariants (however for $k=4$ there are singular orbits,
so that the relative invariants $I,H$ appear).

In order $k=5$ the formula yields $\imath\!\imath_5=3$ differential invariants.
For $k>5$ we deduce the number of pure order $k$ basic differential invariants:
 $$
\imath\!\imath_k-\imath\!\imath_{k-1}=\frac{k(k-1)}2-4.
 $$

\subsection{Absolute differential invariants}\label{S32}

There are two ways of adjusting a basis on the lattice
$\mathfrak{M}$ of weights via relative invariants. As follows from
specification for $\Z^2$-lattice of weights from \S\ref{S22}, the
basic invariants are
 $$
J_1=I^{-1/8}H^{3/8}\in\mathcal{R}^{1,0},\quad J_2=I^{1/4}H^{1/4}\in\mathcal{R}^{0,1}.
 $$
Another choice, which allow to avoid branching but increase the order, is
 $$
\ti J_1=\frac{H_{10}}H\in\mathcal{R}^{1,0},\quad \ti J_2=\frac{H_{01}}H\in\mathcal{R}^{0,1}.
 $$
Then (choosing $J_i$ or $\ti J_i$) we get isomorphism for $k>4$:
 $$
\mathcal{R}^{r,s}_k/\mathcal{R}^{r,s}_{k-1}\simeq \mathcal{I}_k/\mathcal{I}_{k-1},\qquad
F\mapsto F/(J_1^rJ_2^s).
 $$

Thus with any choice the list of basic differential invariants in order 5 is
 $$
\bar H_{10}=H_{10}/(J_1^3J_2),\ \bar H_{01}=H_{01}/(J_1^2J_2^2),\
\bar K=K/(J_1J_2^2)
 $$
and in pure order 6 is
 \begin{gather*}
\bar H_{20}=H_{20}/(J_1^4J_2),\ \bar H_{11}=H_{11}/(J_1^3J_2^2),\
\bar H_{02}=H_{02}/(J_1^2J_2^3),\ \bar K_{10}=K_{10}/(J_1^2J_2^2),\\
\bar K_{01}=K_{01}/(J_1J_2^3),\ \bar\O^4_{20}=\O^4_{20}/(J_2^3),\ \bar\O^4_{11}=\O^4_{11}/(J_1^{-1}J_2^4),\ \bar\O^4_{02}=\O^4_{02}/(J_1^{-2}J_2^5),\\
\bar\O^5_{10}=\O^5_{10}/(J_1^{-2}J_2^4),\ \bar\O^5_{01}=\O^5_{01}/(J_1^{-3}J_2^5),\
\bar\O^6=\O^6/(J_1^{-4}J_2^5).
 \end{gather*}
Higher order differential invariants can be obtained in a similar way from the
basic relative invariants, but alternatively we can adjust invariant derivations
by letting $\nabla_j=J_1^{\rho_j}J_2^{\z_j}\cdot\Delta_j|_{r=s=0}$
with a proper choice of the weights $\rho_j,\z_j$. Namely we let
 \begin{gather*}
\nabla_p=\frac{J_1}{J_2}\D_p,\qquad \nabla_x=\frac1{J_1}\bigl(\hat\D_x+u\D_p\bigr),\\
\nabla_y=\frac1{J_2}\Bigl(\D_y+\frac{u^5}{5u^4}\hat\D_x+
\bigl(\frac{u_{10}^4}{u^4}+\frac{6u\,u^5}{5u^4}+2u^1\bigr)\D_p\Bigr).
 \end{gather*}
These form a basis of invariant derivatives over $\mathcal{I}$ and we have:
 \begin{gather*}
[\nabla_p,\nabla_x]=-\tfrac18\bar H_{10}\nabla_p-\tfrac38\bar K\nabla_x+\nabla_y,\\
[\nabla_p,\nabla_y]=(\bar\O^5_{10}-\tfrac18\bar H_{01})\nabla_p
+\tfrac15\bar\O^6\nabla_x-\tfrac14\bar K\nabla_y,\\
[\nabla_x,\nabla_y]=\bar\O^4_{20}\nabla_p+(\tfrac15\bar\O^5_{10}+\tfrac38\bar H_{01})\nabla_x
-\tfrac14\bar H_{10}\nabla_y.
 \end{gather*}
The derivations and coefficients can be also expressed in terms of
non-branching invariants
$\ti J_1=\frac83\nabla_xJ_1$ and $\ti J_2=4\nabla_yJ_2$.

 \begin{theorem}
The space $\mathcal{I}$ of differential invariants is generated by the
invariant derivations $\nabla_x,\nabla_y,\nabla_p$ on the
generic stratum.
 \end{theorem}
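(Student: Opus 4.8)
The plan is to exploit that an invariant frame produces its own generators through its structure functions, so that --- in contrast with Theorem 1, where the seed $H$ must be adjoined --- no separate invariant is needed. On the generic stratum $I\ne0$ the derivations $\nabla_x,\nabla_y,\nabla_p$ are linearly independent and each raises order by one, $\nabla_j:\mathcal{I}_k\to\mathcal{I}_{k+1}$, so they form a frame of invariant derivations over $\mathcal{I}=\mathcal{R}^0$. The bracket of two invariant derivations is again an invariant derivation, and since the frame is a basis the coefficients in the three commutator relations displayed before the theorem are forced to be absolute invariants. Reading them off gives at once the order-5 invariants $\bar H_{10},\bar K$ and the order-6 invariants $\bar\O^4_{20},\bar\O^6$; the remaining $\bar H_{01}$ and $\bar\O^5_{10}$ are recovered by solving the non-degenerate $2\times2$ linear system formed by the two structure functions $\tfrac15\bar\O^5_{10}+\tfrac38\bar H_{01}$ and $\bar\O^5_{10}-\tfrac18\bar H_{01}$. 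This reproduces the complete order-5 list, in agreement with $\imath\!\imath_5=3$ from \S\ref{S31}.

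Next I would generate the remaining pure order-6 invariants by differentiating the seeds just obtained. The symbols of $\nabla_x\bar H_{10},\nabla_x\bar H_{01},\nabla_y\bar H_{01},\nabla_x\bar K,\nabla_y\bar K$ recover $\bar H_{20},\bar H_{11},\bar H_{02},\bar K_{10},\bar K_{01}$ modulo products of lower invariants; the Leibniz corrections coming from the weight factors $J_1,J_2$ built into $\nabla_j$ involve only invariants already constructed and are therefore harmless. The three still-missing invariants $\bar\O^5_{01},\bar\O^4_{11},\bar\O^4_{02}$ are supplied by the differential syzygies of Remark 2 in \S\ref{S22}, which express $\O^5_{01},\O^4_{11},\O^4_{02}$ through $\Delta$-derivatives and products of $\O^5_{10},\O^6,\O^4_{20}$; dividing by the relevant monomial $J_1^rJ_2^s$ turns these into expressions for $\bar\O^5_{01},\bar\O^4_{11},\bar\O^4_{02}$ in terms of $\nabla$-derivatives and products of the invariants already in hand. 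This accounts for all $\imath\!\imath_6-\imath\!\imath_5=11$ invariants of pure order 6.

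For $k>6$ I would propagate the statement by induction, transferring Tresse's generation result from the relative to the absolute algebra through the normalization isomorphism $\mathcal{R}^{r,s}_k/\mathcal{R}^{r,s}_{k-1}\simeq\mathcal{I}_k/\mathcal{I}_{k-1}$, $F\mapsto F/(J_1^rJ_2^s)$. By Theorem 1 the quotient $\mathcal{R}_k/\mathcal{R}_{k-1}$ is spanned by the $\Delta$-derivatives of relative invariants of order $k-1$; since $\nabla_j=J_1^{\rho_j}J_2^{\z_j}\Delta_j|_{r=s=0}$ is exactly the weight-zero normalization of $\Delta_j$, the isomorphism carries these onto the symbols of $\nabla_x,\nabla_y,\nabla_p$ applied to $\mathcal{I}_{k-1}$. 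Hence $\mathcal{I}_k$ is generated over $\mathcal{I}_{k-1}$ by the three derivations, and the induction closes.

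The main obstacle is controlling the passage from symbols to genuine equalities. One must verify that every Leibniz correction created when $\nabla_j$ --- which carries the weight factors $J_1,J_2$ --- is applied to a normalized invariant lands in the subalgebra already generated, and that the solved structure functions and the rewritten syzygies bring in no new, unaccounted invariant. The clean way to manage this is to argue throughout modulo the filtration, so that only top-order symbols matter; the dimensional count of \S\ref{S31} then certifies that the invariants constructed at each order form a basis, with neither gaps nor redundancy. In this way the whole argument reduces to the finite checks in orders 5 and 6, after which the symbol computation runs uniformly in $k$.
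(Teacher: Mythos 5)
Your proposal is correct and follows essentially the same route as the paper: the paper's own (very compressed) justification of this theorem is exactly that the coefficients of the displayed commutators, their $\nabla_j$-derivatives, and the relations of \S\ref{S22} transferred through the normalization isomorphism $F\mapsto F/(J_1^rJ_2^s)$ produce the complete list of basic invariants, with the dimension count of \S\ref{S31} certifying completeness order by order. Your write-up merely makes explicit the steps the paper leaves implicit (the non-degenerate $2\times2$ system separating $\bar H_{01}$ and $\bar\O^5_{10}$, the Leibniz corrections from the weight factors, and the induction on order via Theorem 1).
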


Indeed, we mean here that taking coefficients of the commutators, adding their derivatives
etc leads to a complete list of basic differential invariants.

On the other hand, if we want to list generators according to the order,
so that invariant derivations only add new in the
corresponding order, then we shall restrict to
$\bar H_{10},\bar H_{01},\bar K$ in order 5, add $\bar\O_{ij}^{6-i-j}$ in order 6 and the
rest in every order is generated from these by invariant derivations with $\nabla_j$.
The relations can be deduced from these of \S\ref{S22}.

\subsection{Equivalence problem}

2nd order ODEs $\E$ can be considered as sections $\mathfrak{s}_\E$ of the bundle $\pi$,
whence we can restrict any differential invariant $J\in \mathcal{I}_k$ to the equation
via pull-back of the prolongation:
 $$
J^\E:=(\mathfrak{s}_\E^{(k)})^*(J)\in C^\infty_\text{loc}(\R^3(x,y,p)).
 $$

Consider most non-degenerate 2nd order ODEs $\E$,
such that\footnote{Here and in what follows one can
assume (higher micro-)local treatment.} $\bar H_{10}^\E$,
$\bar H_{01}^\E$, $\bar K^\E$ are local coordinates on $\R^3(x,y,p)$. Then
the other differential invariants on the equation can be expressed as functions of these:
 $$
\bar H_{ij}^\E=\Phi^\E_{ij}(\bar H_{10}^\E,\bar H_{01}^\E,\bar K^\E),\
\bar K_{ij}^\E=\Psi^\E_{ij}(\bar H_{10}^\E,\bar H_{01}^\E,\bar K^\E),\
\bar \O_{ij}^{k\,\E}=\Upsilon^{k\,\E}_{ij}(\bar H_{10}^\E,\bar H_{01}^\E,\bar K^\E).\!\!
 $$
Due to the relations above we can restrict to the following collection of functions:
 \begin{equation}\label{bascoll}
\Phi^\E_{20}, \Phi^\E_{11}, \Phi^\E_{02},
\Psi^\E_{10}, \Psi^\E_{01},
\Upsilon^{6\,\E}, \Upsilon^{5\,\E}_{10}, \Upsilon^{4\,\E}_{20},
 \end{equation}
the others being expressed through the given ones via the operators of derivations
(which naturally restrict to $\E$ as directional derivatives).

 \begin{theorem}\label{thm3}
Two generic 2nd order differential equations $\E_1$, $\E_2$ are point equivalent
iff the collections
(\ref{bascoll}) of functions on $\R^3$ coincide.
 \end{theorem}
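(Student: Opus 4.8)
The statement has the familiar shape of an equivalence theorem built on a complete system of invariants, so the plan is to treat the three coordinate invariants $\bar H_{10}^\E,\bar H_{01}^\E,\bar K^\E$ together with the invariant derivations $\nabla_x,\nabla_y,\nabla_p$ as a canonically attached absolute parallelism (coframe) on $\R^3(x,y,p)$, and to read the collection (\ref{bascoll}) as the structure functions of this parallelism written in the invariant coordinates. The two implications then become, respectively, the invariance of the signature and its reconstruction property.

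Necessity is the routine direction. A point transformation $\phi$ of the plane prolongs to the bundle $\pi$ and carries the prolonged section $\mathfrak{s}_{\E_1}^{(k)}$ to $\mathfrak{s}_{\E_2}^{(k)}$; since every $J\in\mathcal{I}$ is constant along the orbits of the lifted action, the restrictions satisfy $J^{\E_1}=\phi^*(J^{\E_2})$ simultaneously for all invariants. In particular $\phi$ intertwines the coordinate invariants, i.e. it is the identity in the coordinates $(\bar H_{10},\bar H_{01},\bar K)$. Because each higher invariant is, by hypothesis of genericity, a function of these three through $\Phi^\E,\Psi^\E,\Upsilon^\E$, the relations $J^{\E_1}=\phi^*(J^{\E_2})$ force these functions to coincide, and the reduction to the sublist (\ref{bascoll}) is precisely the content of the syzygies of \S\ref{S22}.

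For sufficiency I would reconstruct the transformation from the signature. First, using $\bar H_{10}^\E,\bar H_{01}^\E,\bar K^\E$ as coordinates on each copy of $\R^3$, I define $\phi$ to be the map that is the identity in these coordinates; this is a well-defined local diffeomorphism exactly on the non-degenerate stratum where the three invariants are independent. Next I would propagate the hypothesis: the commutator relations for $\nabla_x,\nabla_y,\nabla_p$ in \S\ref{S32} have as coefficients the invariants $\bar H_{10},\bar H_{01},\bar K,\bar\O^4_{20},\bar\O^5_{10},\bar\O^6$, and by the generation statement for $\mathcal{I}$ every remaining invariant is obtained from the base ones by applying the $\nabla_j$. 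Hence equality of (\ref{bascoll}) together with the matching of $\bar H_{10},\bar H_{01},\bar K$ propagates under $\nabla_j$ to the equality of all restricted invariants $J^{\E_1}=\phi^*(J^{\E_2})$, so $\phi$ sends the invariant frame $\nabla_j^{\E_1}$ to $\nabla_j^{\E_2}$ and preserves every structure function; it is thus an isomorphism of the two absolute parallelisms.

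The final and main step is to show that this base equivalence $\phi$ is the shadow of a genuine point transformation, not merely an abstract coframe isomorphism. Here I would invoke the equivariance of the whole construction: the frame $\nabla_j$ and the invariants were produced from the point action on $J^\infty\R^3$ (\S\ref{S21}), so matching all scalar differential invariants means the two prolonged sections meet the same orbit of the prolonged pseudogroup at $\phi$-corresponding points. By Lie--Tresse completeness on the generic stratum, this common-orbit condition is equivalent to the existence of a point transformation realizing $\phi$, which one integrates up the prolongation tower, the matching structure functions guaranteeing compatibility at each order. I expect this reconstruction --- upgrading the coframe isomorphism on $\R^3$ to an element of the infinite point pseudogroup and verifying that no obstruction beyond (\ref{bascoll}) surfaces at higher order --- to be the real obstacle; it is exactly the syzygies of \S\ref{S22} that render the higher-order compatibility automatic, which is why the finite list (\ref{bascoll}) suffices.
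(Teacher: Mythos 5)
Your necessity argument, and your observation that the syzygies of \S\ref{S22} let equality of the finite list (\ref{bascoll}) propagate under the $\nabla_j$ to equality of \emph{all} restricted invariants, agree with how the paper reads the hypothesis. But the sufficiency direction has a genuine gap, and it sits exactly where you write ``I expect this reconstruction \dots to be the real obstacle'': you never prove that the diffeomorphism $\phi$ of $\R^3(x,y,p)$ defined by matching the invariant coordinates is induced by a point transformation of the plane. The appeal to ``Lie--Tresse completeness'' at that moment is circular. Completeness of the invariants gives, for each fixed point $z$ and each finite order $k$, a $k$-jet of a point transformation taking the $k$-jet of $\E_1$ at $z$ to the $k$-jet of $\E_2$ at the corresponding point; it does \emph{not} produce a single local diffeomorphism of $\R^2(x,y)$ that does this simultaneously at all points. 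Passing from this pointwise, jet-level matching to an actual local equivalence is the entire analytic content of the theorem, and ``one integrates up the prolongation tower'' is a name for the missing argument, not the argument.

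The paper closes precisely this gap with an overdetermined-PDE argument absent from your proposal: it forms the Lie equation (\ref{LieEq}), $\mathfrak{Lie}(\E_1,\E_2)\subset J^2(\R^2,\R^2)$, whose solutions are exactly the sought point transformations, and notes that its symbol coincides with that of $\op{sym}(y''=0)\simeq\op{sl}_3$, hence is of finite type with $g_3=0$ and $\sum\dim g_i=8$. Prolongation--projection can therefore add compatibility conditions at most $8$ times, so formal solvability up to a fixed finite order (a submanifold $\mathcal{T}\subset\mathfrak{Lie}(\E_1,\E_2)^{(10)}$ submersing onto $\R^2$) already forces a genuine local solution. The hypothesis is then used exactly once and in the right place: the order-$k$ compatibility conditions of the Lie equation are the conditions that $\vp^*$ carry the restricted order-$k$ invariants $J^{\E_2}$ to $J^{\E_1}$, which coincidence of (\ref{bascoll}) guarantees. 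If you prefer to keep your coframe formulation (which is in the spirit of Cartan's method, mentioned in the paper's remark after the theorem), you would still need two things your text does not supply: (i) the e-structure equivalence theorem, to convert matching structure functions into a frame-preserving diffeomorphism of $\R^3$, and (ii) a separate argument --- e.g.\ that a contact transformation of $J^1\R$ preserving the fibration over $J^0\R$ is a prolonged point transformation --- showing that a diffeomorphism preserving the invariant frame descends from the plane.
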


 \begin{proof}
Necessity of the claim is obvious. Sufficiency is based on
investigation of solvability of the corresponding Lie
equation\footnote{It is important not to mix solvability, i.e.
existence of local solutions, with compatibility, i.e. existence of
solutions with all admissible Cauchy data. The latter may be cut by
the compatibility conditions. This confusion occurred in the proof
of Theorem 8.3 from \cite{Y}:
 the Lie equation is not formally integrable except for maximally symmetric case. }
 \begin{equation}\label{LieEq}
\mathfrak{Lie}(\E_1,\E_2)=\{[\vp]^2_z\in J^2(\R^2,\R^2):\,
\vp^{(2)}(\E_1\cap\pi_{2,0}^{-1}(z))=\E_2\cap\pi_{2,0}^{-1}(\vp(z))\},
 \end{equation}
which has finite type. Notice that the prolongation
$\mathfrak{Lie}(\E_1,\E_2)^{(k)}$ consists of the jets
$[\vp]^{k+2}_z$ such that $\vp^{(2)}$ transforms $k$-jets of the
equation $\E_1$ to the $k$-jets of the equation $\E_2$ along the
whole fiber over $z\in J^0\R=\R^2(x,y)$.

 \begin{prop}
Suppose that the system $\mathfrak{Lie}(\E_1,\E_2)$ is formally solvable;
more precisely let $\mathcal{T}\subset \mathfrak{Lie}(\E_1,\E_2)^{(10)}\subset
J^{12}(\R^2,\R^2)$ be such a manifold that $\pi_{12}|_\mathcal{T}$ is
a submersion onto $\R^2$.
Then this system is locally solvable\footnote{A regularity assumption
is needed for this, which is given by the non-degeneracy condition
$d\bar H_{10}^\E\we d\bar H_{01}^\E\we d\bar K^\E\ne0$.},
so that the equations are point equivalent, i.e.
$\exists\vp\in\op{Diff}_\text{\rm loc}(\R^2,\R^2)$:
$\forall z\in\R^2\ [\vp]_z^2\in\mathfrak{Lie}(\E_1,\E_2)$.
 \end{prop}

Indeed, the symbol of the system $\mathfrak{Lie}(\E_1,\E_2)$
(provided it is non-empty, which is usually the case for generic $\E_1,\E_2$)
is the same as for
the symmetry algebra $\op{sym}(\E)$, namely: $g_0=T=\R^2$, $g_1=T^*\ot T$,
$g_2\subset S^2T^*\ot T$ has codimension 4
and no (complex) characteristic covectors, so that $g_3=g_2^{(1)}=0$,
whence $\oplus\,g_i\simeq\op{sym}(y''=0)\simeq\op{sl}_3$.

It should be also noted that the first prolongation
$\mathfrak{Lie}(\E_1,\E_2)^{(1)}\subset J^3(\R^2,\R^2)$ always exists
and is of Frobenius type, while the next one
has proper projection unless the compatibility conditions vanish.

We are interested in solvability of the system,
so we successively add the compatibility conditions. The first belongs
to the space $H^{2,2}(\mathfrak{Lie})\simeq\R^2$,
but it may happen that only one of the components
is non-zero (if both are zero, the system is compatible and we are done, if both
are non-zero we have more equations to add and the process stops earlier).
So we add this equation of the second order to the system of 4 equations and get a
new system $\widetilde{\mathfrak{Lie}}$ of formal $\op{codim}=5$.

Then we continue to add equations-compatibilities
and can do it maximum $\sum\dim g_i=8$ times, so that we
get $3+8=11$-th order condition.
After this we get only discrete set of possibilities for solutions and checking
them we get that either we have a 12-jet solution or there do not exist solutions at all.

In these arguments we adapted dimensional count, i.e. we assumed regularity.
But singularities can bring only zero measure of values (by Sard's lemma), so that
our condition still works even in smooth (not only analytic) situation.

Now let us explain formal solvability for our problem. A jet $[\vp]^{k+2}_z$
belongs to the prolongation $\mathfrak{Lie}(\E_1,\E_2)^{(k)}$ iff
$\vp^{(k+2)}$ transforms $\E_1^{(k)}\cap\pi_{k+2,0}^{-1}(z)$ to
$\E_2\cap\pi_{k+2,0}^{-1}(\vp(z))$. For randomly chosen equations the system
$\mathfrak{Lie}(\E_1,\E_2)$ will be empty over any point $z\in\R^2$
just because none map can transform the whole fiber $\E_1\cap\pi_{2,0}^{-1}(z_1)$
into another fiber $\E_2\cap\pi_{2,0}^{-1}(z_2)$
(example: ODEs $y''=f(x,y,y')$ with polynomial dependence on $p=y'$ of degrees 3 and 4).

The compatibility for the system $\mathfrak{Lie}(\E_1,\E_2)$ of order $k$
are the conditions that $\vp^*$ transforms the restricted order $k$
differential invariants $J^{\E_2}$ into $J^{\E_1}$. Since this is possible
by our assumption, we get prolongation
$\mathcal{T}\subset\mathfrak{Lie}(\E_1,\E_2)^{(10)}$. Moreover this $\mathcal{T}$
will be a submanifold and no singularity issues arise.
This yields us local point equivalence.
 \end{proof}

 \begin{rk}
If differential invariants $J_1\dots J_3$ are independent on equation $\E$,
then there is another way to define invariant derivatives \cite{Lie$_1$,Ol,KL$_1$},
so called Tresse derivatives, which in local coordinates have the form:
$\hat\p/\hat\p J_i=\sum_j[\D_a(J_b)]^{-1}_{ij}\D_j$. In our case, when we
take $\bar H_{10}^\E,\bar H_{01}^\E,\bar K^\E$ as coordinates on the equation,
they are just $\p/\p \bar H_{10}^\E$, $\p/\p \bar H_{01}^\E$, $\p/\p \bar K^\E$,
when restricted to $\E$.
 \end{rk}

Another generic case is when
we have 3 functional independent invariants
among\footnote{We do not know if this is realizable in other
cases, than that described by Theorem \ref{thm3}.}
 \begin{equation}\label{lst}
\bar H_{10}^\E,\ \bar H_{01}^\E,\ \bar K^\E,\
\bar H_{20}^\E,\ \bar H_{11}^\E,\ \bar H_{02}^\E, \
\bar K_{10}^\E,\ \bar K_{01}^\E,\
\bar\O^{6\,\E},\ \bar\O^{5\,\E}_{10}, \bar\O^{4\,\E}_{20}.
 \end{equation}
In this case we can express the rest of invariants through the given 3 basic,
and the classification is precisely the same as in Theorem \ref{thm3}.

There are other regular classes of 2nd order ODEs (in general, equations
are stratified according to functional ranks):
 \begin{enumerate}
 \item Collection (\ref{lst}) has precisely 2 functionally independent invariants,
 \item Collection (\ref{lst}) has only 1 functionally independent invariant,
 \item Collection (\ref{lst}) consists of constants.
 \end{enumerate}

In cases 1 or 2 we can choose basic invariants (2 or 1 respectively --
note that the space of all differential invariants, not only of
collection (\ref{lst}), will then have functional rank 2 or 1) and express
the rest through them. The functions-relations will be again the only
obstructions to point equivalence.

In the latter case all differential invariants are constant on the equation $\E$,
so for the equivalence these (finite number of) constants should coincide.

 \begin{rk}
Cartan's equivalence method provides a canonical frame (on some
bundle over the original manifold), which yields all differential
invariants but with mixture of orders. Otherwise around, given the
algebra of differential invariants, we can choose $J_1,\dots,J_s$
among them, which are functionally independent on a generic
(prolonged) equation. Then $dJ_1,\dots,dJ_s$ will be a canonical
basis of 1-forms, which can work as a (holonomic) moving frame.
Non-holonomic frames can appear upon dualizing invariant
(non-Tresse) derivatives.
 \end{rk}

Let us finally give another formulation of the equivalence theorem. We can consider
collection (\ref{lst}) as a map $\R^3\simeq\E\to\R^{11}$ by varying the point of
our equation $\E$. Thus we get (in regular case) a submanifold of $\R^{11}$ of
dimension 3, 2, 1 or 0 respectively. This submanifold is an invariant (and the previous
formulation was only a way to describe it as a graph of a vector-function):

 \begin{theorem}\label{thm3+}
Two 2nd order regular differential equations $\E_1$, $\E_2$ are
point equivalent iff the corresponding submanifolds in the space of
differential invariants $\R^{11}$ coincide.
 \end{theorem}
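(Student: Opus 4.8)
The plan is to reduce Theorem \ref{thm3+} to the already-established Theorem \ref{thm3} by recognizing that the two formulations encode the same geometric data, differing only in how one chooses to present the invariant submanifold of $\R^{11}$. The essential observation is that the collection (\ref{lst}) defines, for each equation $\E$, a map $\Xi_\E:\R^3\simeq\E\to\R^{11}$, and the image of this map is the invariant submanifold whose coincidence we must characterize. So the first step is to make precise the relationship between "the collection of functions (\ref{bascoll}) coincides" (the content of Theorem \ref{thm3}) and "the submanifolds $\Xi_{\E_1}(\E_1)$ and $\Xi_{\E_2}(\E_2)$ in $\R^{11}$ coincide."

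\smallskip

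First I would treat the most generic case, where three of the invariants in (\ref{lst}) — say $\bar H_{10}^\E,\bar H_{01}^\E,\bar K^\E$ — are functionally independent and hence serve as coordinates on $\E\simeq\R^3$. In this situation the map $\Xi_\E$ is an embedding, and its image is precisely the graph of the vector-function $(\Phi^\E_{ij},\Psi^\E_{ij},\Upsilon^{k\,\E}_{ij})$ over the first three coordinates. Coincidence of the two submanifolds as subsets of $\R^{11}$ is then equivalent to coincidence of these graphs, which is exactly coincidence of the functions in (\ref{bascoll}) after re-expressing everything in the common coordinates $(\bar H_{10},\bar H_{01},\bar K)$. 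Thus in this case Theorem \ref{thm3+} is a direct restatement of Theorem \ref{thm3}, and equivalence follows. The point worth stressing is that the submanifold formulation is coordinate-free: it does not privilege any particular triple of invariants as the independent ones, so it automatically accommodates the situation where a different triple from (\ref{lst}) happens to be the functionally independent one.

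\smallskip

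Next I would handle the lower-rank regular strata — the cases enumerated after Theorem \ref{thm3}, where collection (\ref{lst}) has functional rank $2$, $1$, or $0$. Here $\Xi_\E$ is no longer an embedding of a $3$-manifold, but by the regularity assumption its image is a submanifold of $\R^{11}$ of dimension $2$, $1$, or $0$ respectively. In each case one selects a maximal functionally independent subcollection as local coordinates on the image and expresses the remaining invariants as functions of these; coincidence of the submanifolds is again equivalent to coincidence of these defining relations, which by the same Lie-equation solvability argument as in the Proposition yields point equivalence. The rank-$0$ case degenerates to the statement that finitely many constants must agree, matching the discussion in the text. The only subtlety is ensuring that the solvability-of-the-Lie-equation argument, proved in the Proposition under the non-degeneracy condition $d\bar H_{10}^\E\we d\bar H_{01}^\E\we d\bar K^\E\neq0$, has its appropriate analog in the lower-rank regular classes; one invokes the corresponding non-degeneracy (the functional rank being locally constant) to guarantee that the prolonged Lie system still has a submersive projection and no spurious singularities.

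\smallskip

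\textbf{The main obstacle} I expect is precisely this last point: verifying that the reduction to Theorem \ref{thm3} is legitimate across all regular strata, not merely on the top stratum. The clean graph argument works verbatim only when three invariants are independent; in the lower-rank cases one must argue that the intrinsic submanifold $\Xi_\E(\E)\subset\R^{11}$, together with the inherited invariant derivations restricted to $\E$, carries enough information to reconstruct the full jet of the equation and drive the Lie-equation solvability. The forward-looking resolution is that regularity (constancy of functional rank) lets one stratify $\R^3\simeq\E$ and apply the finite-type solvability of $\mathfrak{Lie}(\E_1,\E_2)$ stratum-by-stratum, with Sard's lemma absorbing the measure-zero singular locus exactly as in the proof of the Proposition. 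Once this is in place, the equivalence of the two formulations is immediate, since both assert coincidence of the same invariant object viewed either as a graph or as an abstract submanifold.
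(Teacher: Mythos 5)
Your proposal is correct and takes essentially the same approach as the paper: Theorem \ref{thm3+} is given no separate proof there, being presented exactly as you treat it --- the coordinate-free repackaging of Theorem \ref{thm3} (whose graph formulation supplies the top-rank case) together with the rank-2, 1, 0 regular cases discussed just before the theorem, with all analytic content residing in the Lie-equation solvability Proposition. If anything, your attention to justifying the lower-rank strata is more explicit than the paper's own treatment, which merely asserts that ``the functions-relations will be again the only obstructions to point equivalence'' in those cases.
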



\section{Singular stratum: projective connections}

On the space $J^3\R^3(x,y,p)$ the lifted action of the pseudogroup
$\mathfrak{h}$ is transitive. But its lift to the space of 4-jets is
not longer such: There are singular strata, given by the equations
$I=0,H=0$. Moreover they have a singular substratum $I=H=0$ in
itself, on which the pseudogroup action is transitive, so that any
equation from it is point equivalent to trivial ODE $y''=0$
\cite{Lie$_2$,Lio,Tr$_1$}.

In this subsection we consider the singular stratum
$I=0$\footnote{The other stratum $H=0$ can be treated similarly.
Indeed, though the invariants $I,H$ look quite unlike, they are
proportional to self-dual and anti-self-dual components of the
Fefferman metric \cite{F} and this duality is very helpful
\cite{NS}.

Note however that even though it is difficult to solve the PDE $H=0$
without non-local transformations, some partial solutions can be
found using symmetry methods. For instance, a 3-dimensional family
of solutions is $y''=\vp(p)/x$ with
$\vp'''=\dfrac{\vp''(2\vp-2-\vp')}{\vp(\vp-1)}$.
 \label{fnt14}}.
It corresponds to equations of the form
 \begin{equation}\label{2ndOrCub}
y''=\a_0(x,y)+\a_1(x,y)p+\a_2(x,y)p^2+\a_3(x,y)p^3,\quad p=y'.
 \end{equation}
This class of equations is invariant under point transformations.
Moreover it has very important geometric interpretation, namely
such ODEs correspond to projective connections on 2-dimensional manifolds
\cite{C}. We will indicate 3 different approaches to the equivalence
problem.

\subsection{The original approach of Tresse}\label{S41}

The idea is to investigate the algebra of differential invariants,
following S.Lie's method, and then to solve the equivalence problem
via them. In \cite{Tr$_1$} lifting the
action of point transformation to the space $J^k(2,4)$ (jets of maps
$(x,y)\mapsto(\a_0,\dots,\a_3)$) he counts the number of basic
differential invariants of pure order $k$ to be
 $$
\begin{array}{ccccccccccccc}
 k: & 0 & 1 & 2 & 3 & 4 & 5 & 6 & 7 & 8& \dots & k & \dots\\
\#: & 0 & 0 & 0 & 0 & 6 & 8 & 10 & 12 & 14 & \dots & 2(k-1)
\end{array}
 $$
An independent check of this (with the same method as in
\S\ref{S31}) is given in \cite{Y}.

The action of $\mathfrak{g}$ is transitive on the space of 1st jets
and its lift is transitive on the space of second jets $J^2(2,4)$
outside the singular orbit $L_1=L_2=0$, where
 $$
L_1=-\a_{2xx}+2\a_{1xy}-3\a_{0yy} -3\a_3\a_{0x}+\a_1\a_{2x}-6\a_0\a_{3x}
+3\a_2\a_{0y}-2\a_1\a_{1y}+3\a_0\a_{2y}
 $$
 $$
L_2=-3\a_{3xx}+2\a_{2xy}-\a_{1yy} -3\a_3\a_{1x}+2\a_2\a_{2x}-3\a_1\a_{3x}
+6\a_3\a_{0y}-\a_2\a_{1y}+3\a_0\a_{3y}
 $$
These second order operators\footnote{corresponding to $(3k,-3h)$ in
\cite{Tr$_1$}.} were found by S.Lie \cite{Lie$_2$} who showed that
vanishing $L_1=L_2=0$ characterizes trivial (equivalently:
linearizable) ODEs. R.Liouville \cite{Lio} proved that the tensor
 \begin{equation}\label{LiouT}
L=(L_1dx+L_2dy)\ot(dx\we dy),
 \end{equation}
responsible for this, is an absolute differential invariant.

Further on Tresse claims that all absolute differential invariants
can be expressed via $L_1,L_2$, but \cite{Lio,Tr$_2$} do not contain
these formulae. The problem was handled recently by V.Yumaguzhin
\cite{Y} (the whole set of invariants was presented, though not
fully described).

Namely it was shown that the action of $\mathfrak{g}$ in $J^3(2,4)$
is transitive outside the stratum $F_3=0$, where
 \begin{multline*}
F_3=(L_1)^2\D_y(L_2)-L_1L_2(\D_x(L_2)+\D_y(L_1))+(L_2)^2\D_x(L_1)\\
-(L_1)^3\a_3+(L_1)^2L_2\a_2-L_1(L_2)^2\a_1+(L_2)^2\a_0
 \end{multline*}
is the relative differential invariant from \cite{Lio}. The other
tensor invariants can be expressed through these. The invariant
derivations are\footnote{The first one in the relative form was
known already to Liouville \cite{Lio}:
$$\tilde\nabla_1=L_1\D_y-L_2\D_x+m(\D_x(L_2)-\D_y(L_1)):\mathcal{R}^m\to\mathcal{R}^{m+2},$$
where $\mathcal{R}^m$ is the space of weight $m$ relative
differential invariants corresponding to the cocycle
$C_\xi=\op{div}_{\oo_0}(\xi)$, where $\oo_0=dx\we dy$. He was very
close, but did not write the second one.}
 $$
\nabla_1=\frac{L_2}{(F_3)^{2/5}}\D_x-\frac{L_1}{(F_3)^{2/5}}\D_y,\quad
\nabla_2=\frac{\Psi_2}{(F_3)^{4/5}}\D_x-\frac{\Psi_1}{(F_3)^{2/5}}\D_y,
 $$
where
 \begin{gather*}
\Psi_1=-L_1(L_1)_y+4L_1(L_2)_x-3L_2(L_1)_x-(L_1)^2\a_2+2L_1L_2\a_1-3(L_2)^2\a_0,\\
\Psi_2=3L_1(L_2)_y-4L_2(L_1)_y+L_2(L_2)x-3(L_1)^2\a_3+2L_1L_2\a_2-(L_2)^2\a_1.
 \end{gather*}

Now we can get two differential invariants of order 4 as the coefficients of the commutator
 $$
[\nabla_1,\nabla_2]=I_1\nabla_1+I_2\nabla_2.
 $$
Related invariants are the following: one applies the invariant
derivations $\nabla_i$ (extended to the relative invariants) to
$F_3$ and gets another relative differential invariant of the same
weight (the relation here is almost obvious since
$\nabla_1\we\nabla_2$ is proportional to $F_3$). Thus
$\nabla_1(F_3)/F_3,\nabla_2(F_3)/F_3$ are absolute invariants.

To get four more invariants $I_3,\dots,I_6$ of order 4, consider the
Lie equation, formed similar to (\ref{LieEq}) for the cubic 2nd
order ODEs (\ref{2ndOrCub}), see (\ref{LEC}). After a number of
prolongation-projection we get a Frobenius system, and its
integrability conditions yield the required differential invariants
(in \cite{Y} these are obtained in a different but seemingly
equivalent way).

Now we can state that the algebra $\mathcal{I}$ is generated by the
invariants $I_1,\dots,I_6$ together with the invariant derivatives
$\nabla_1,\nabla_2$. An interesting problem is to describe all
differential syzygies between these generators.

\subsection{The second Tresse approach}\label{S42}

The invariants of \S\ref{S22} are not defined on the stratum $I=0$
due to the fact that most expressions contain $I$ in denominator.
But due to footnote$^\text{\ref{fnt14}}$ the relative invariants $I,H$
are on equal footing. And in fact Tresse in \cite{Tr$_2$} constructs
another basis of relative invariants with $H$ in denominator.

Thus if we restrict this set to the stratum $I=0$ minus the trivial equation,
corresponding to $I=H=0$, we get relative/absolute differential invariants
of the ODEs (\ref{2ndOrCub}). For instance $H$ is proportional to $L_1+L_2p$,
which under substitution of $p=\frac{dy}{dx}$ is proportional to the tensor $L$.
The other invariants are rational functions in $p$ on the cubics (\ref{2ndOrCub}),
which may be taken in correspondence with the invariants of
the approach from \S\ref{S41}.

The proposed idea can be viewed as a change of coordinates in the algebra
$\mathcal{I}$. Yet, another approach was sketched in \cite{Tr$_1$}, which can be
called a non-local substitution.

Namely by a point transformation Tresse achieves $L_2=0$, and so brings
the tensor $L_1dx+L_2dy$ to the form $\l\,dx$. Then the point transformation
pseudogroup is reduced to the triangular pseudogroup $x\mapsto X(x)$, $y\mapsto Y(x,y)$,
and the invariants are generated by the invariant derivatives $\Delta_x,\Delta_y$ and the invariants $B,C,D$ of orders 1, 2, 2 respectively (\cite{Tr$_1$}, ch.III),
which though do not correspond to the orders in the approach of \S\ref{S41}.

\subsection{Lie equations}

Let $\mathfrak{s}_\E:\R^2\to\R^4$ be the map
$(x,y)\mapsto(a_0,a_1,a_2,a_3)$ corresponding to a 2nd order ODE $\E$
(\ref{2ndOrCub}). With two such ODEs we relate the Lie equation
on the equivalence between them:
 \begin{equation}\label{LEC}
\mathfrak{Lie}(\E_1,\E_2)=\{[\vp]^2_z\in J^2(2,2):
\hat\vp\bigl(\mathfrak{s}_{\E_1}(z)\bigr)=\mathfrak{s}_{\E_2}(\vp(z))\},
 \end{equation}
where $\hat\vp:\R^2\times\R^4\to\R^2\times\R^4$ is the lift
of a map $\vp:\R^2\to\R^2$ to a map
of ODEs (\ref{2ndOrCub}). On infinitesimal level, the lift of a vector field
$X=a\,\p_x+b\,\p_y$ is
 \begin{multline*}
\hat X= a\,\p_x+b\,\p_y+(b_{xx}+\a_0(b_y-2a_x)-\a_1b_x)\,\p_{\a_0}\\
+(2b_{xy}-a_{xx}-3\a_0a_y-\a_1a_x-2\a_2b_x)\p_{\a_1}
+(b_{yy}-2a_{xy}-2\a_1a_y-\a_2b_y-3\a_3b_x)\p_{\a_2}\\
+(-a_{yy}-\a_2a_y+\a_3(a_x-2b_y))\p_{\a_3}.
 \end{multline*}

For one equation $\E_1=\E_2$ infinitesimal version of the finite Lie
equation $\mathfrak{Lie}(\E,\E)$ describes the symmetry algebra
(which more properly should be called a Lie equation \cite{KSp})
$\op{sym}(\E)$: it is formed by the solutions of
 \begin{equation}\label{LECc}
\mathfrak{lie}(\E)=\{[X]^2_z\in J^2(2,2):
\hat X\in T_{\mathfrak{s}_\E(z)}[\mathfrak{s}_\E(\R^2)]\}.
 \end{equation}
The basic differential invariants of the pseudogroup
$\op{Diff}_{\text{loc}}(\R^2,\R^2)$ action on ODEs (\ref{2ndOrCub})
arise as the obstruction to formal integrability of the equation
$\mathfrak{lie}(\E)$ (for the equivalence problem
$\mathfrak{Lie}(\E_1,\E_2)$, but the investigation is similar). In
coordinates, when the section $\mathfrak{s}_\E$ is given by four
equations $\a_i-\a_i(x,y)=0$, overdetermined system (\ref{LEC}) is
written as
 \begin{alignat*}{1}
b_{xx}+\a_0(b_y-2a_x)-\a_1b_x&=a\,\a_{0x}+b\,\a_{0y}\\
2b_{xy}-a_{xx}-3\a_0a_y-\a_1a_x-2\a_2b_x&=a\,\a_{1x}+b\,\a_{1y}\\
b_{yy}-2a_{xy}-2\a_1a_y-\a_2b_y-3\a_3b_x&=a\,\a_{2x}+b\,\a_{2y}\\
-a_{yy}-\a_2a_y+\a_3(a_x-2b_y)&=a\,\a_{3x}+b\,\a_{3y}
 \end{alignat*}

The symbols $g_i\subset S^iT^*\ot T$ are: $g_0=T=\R^2$,
$g_1=T^*\ot T\simeq\R^4$, $g_2\simeq\R^2$ and $g_{3+i}=0$ for $i\ge0$.
The compatibility conditions belong to the Spencer cohomology group
$H^{2,2}(\mathfrak{lie})\simeq\R^2$: this is equivalent to the tensor $L$ of
(\ref{LiouT}). If $L=0$, the equation is integrable\footnote{not only formally,
but also locally smoothly due to the finite type of $\mathfrak{lie}$.} and the solution
space is the Lie algebra $\op{sl}_3$.

If $L\ne0$, the equation $\mathfrak{lie}_0=\mathfrak{lie}(\E)$ has
prolongation-projection\footnote{This means that the Lie equation has
the first prolongation $\mathfrak{lie}^{(1)}\subset J^3(2,2)$, but the
next prolongation exists only over the jets of vector fields $X$, preserving the
tensor $L$.}
$\mathfrak{lie}_1=\pi_{4,1}(\mathfrak{lie}^{(2)})$ with symbols
$g_0=T$, $\bar g_1\simeq \R^2\subset g_1$, $g_2\simeq\R^2$ and
$g_{3+i}=0$ for $i\ge0$.

After prolongation-projection, one gets the equation $\mathfrak{lie}_2$
with symbols $g_0=T$, $\tilde g_1\simeq \R^1\subset\bar g_1$
and $g_{2+i}=0$ for $i\ge0$.
This equation has the following space of compatibility conditions:
$H^{1,2}(\mathfrak{lie}_3)\simeq\R^1$. It yields the condition of
the third order in $\a_i$: $F_3=0$ (this, together with other invariants
\cite{R}, characterizes equations with 3-dimensional symmetry algebra,
namely $\op{sl}_2$).

If $F_3\ne0$, then the prolongation-projection yields the equation $\mathfrak{lie}_3$
with $g_0=T$ and $g_{1+i}=0$ for $i\ge0$. The compatibility conditions are given by
the Frobenius theorem and this provides the basis of differential invariants.

{\bf Remarks.} {\bf 1.} The idea to reformulate equivalence problem
via solvability of an overdetermined system appeared in S.Lie's
linearization theorem, where he showed that an ODE (\ref{2ndOrCub})
is point equivalent to the trivial equation $y''=0$ iff the system
(see \cite{Lie$_2$}, p.365 (we let $z=c,w=C$ etc), and also
\cite{IM})
 \begin{alignat*}{2}
 \frac{\p w}{\p x} &= zw-\a_0\a_3-\frac13\frac{\p\a_1}{\p y}+
\frac23\frac{\p\a_2}{\p x}, &\quad
 \frac{\p z}{\p x} &=
z^2-\a_0w-\a_1z+\frac{\p\a_0}{\p y}+\a_0\a_2,\\
 \frac{\p w}{\p y} &= -w^2+\a_2w+\a_3z+\frac{\p\a_3}{\p x}-\a_1\a_3,
&\quad
 \frac{\p z}{\p y} &= -zw+\a_0\a_3-\frac13\frac{\p\a_2}{\p x}+
\frac23\frac{\p\a_1}{\p y}.
 \end{alignat*}
is compatible. The compatibility conditions here are given by the
Frobenius theorem: $L_1=L_2=0$. In fact, the system can be
transformed into a linear system\footnote{S.Lie considers finite
transformations, whence the non-linearity. A projective
transformation is needed to change this into a linear system, while
the infinitesimal analog --- our Lie equation $\mathfrak{lie}(\E)$
-- is linear from the beginning.}, which is equivalent to half of
our once prolonged Lie equation $\mathfrak{lie}^{(1)}$ (Lie
considers combinations of the unknown functions-component of the
point transformation, that's why in the third order we get only
$4=8/2$ equations, the second half of equations was not much used by
him).

{\bf 2.} Other ways of getting differential invariants arise from problems which
have projectively invariant answers. For instance the following system arose in
3 independent problems:
 $$
u_y=P_0[u,v,w],\ \ u_x+2v_y=P_1[u,v,w],\ \ 2v_x+w_y=P_2[u,v,w],\ \
w_x=P_3[u,v,w],
 $$
where $P_i[u,v,w]$ are linear operators of a special type, with
coefficients being smooth functions in $x,y$. This system can be
obtained similar to $\mathfrak{lie}$ from the condition of existence
of Killing tensors\footnote{Substitution
$u=3\x_y,w=3\eta_x,v=-(\x_x+\eta_y)/2$ transforms this system to the
kind
$\x_{yy}=\dots,2\x_{xy}-\eta_{yy}=\dots,\x_{xx}-2\eta_{xy}=\dots,-\eta_{xx}=\dots$,
which has the same symbol as (\ref{LECc}).}.

In \cite{K} solvability of this system lead to an invariant characterization of
Liouville metrics, in \cite{BMM} to normal forms of metrics with transitive group
of projective transformations and in \cite{BDE} -- to the condition of
local metrisability of projective structures on surfaces.

All these problems have the answers (for instance, in the first mentioned paper,
the number of Killing tensors of a metric), which are projective invariants.
Thus they provide projective differential invariants and in turn can be expressed via
any basis of them.

{\bf 3.} Many papers addressed the higher-dimensional version of the same
equivalence problem (which is surprisingly easier, because the Lie equation
is more overdetermined). In Cartan \cite{C} this is the study of the projective
connection. Refs. \cite{Th,Lev} address the algebra of
scalar projective differential invariants.

However in neither of these approaches the Tresse method was
superseded. For instance, in the latter reference even the number of
differential invariants for the 2-dimensional case was not
determined. On the other hand, the method of Lie equations allows to
obtain the algebra of projective invariants in the
higher-dimensional case as well.


\section{Application to symmetries}


At the end of \cite{Tr$_2$} a classification of symmetric equations
is given. It turns out that the symmetry algebra can be of
dimensions 8, 3, 2, 1 or 0. This follows from the study of
dependencies among differential invariants, and it is not obvious
that this automatically applies to all singular strata (but it is
true).

Thus if $\dim\op{Sym}(\E)=8$ the ODE is equivalent to the trivial
$y''=0$. If $\dim\op{Sym}(\E)=3$, the normal forms are ($y'=p$):
 \begin{alignat*}{2}
y''&=p^a &\qquad y''&=\frac{(cp+\sqrt{1-p^2})(1-p^2)}{x} \\
y''&=e^p &\qquad y''&=\pm(xp-y)^3.
 \end{alignat*}
Only the last form belongs to the singular stratum $I=0$.

Due to symmetry between $I$ and $H$, there should be corresponding
normal form with $H=0$. Here one can be mislead since direct
calculations shows that none has vanishing $H$. The reason is
however that Tresse uses Lie's classification of Lie algebras
representation by vector fields on the plane. For 3-dimensional
algebras Lie used normal forms over $\C$, and ineed the third normal
form has $H=0$ for the parameter $c=\pm i$. Thus over $\R$ the above
normal forms should be extended.

As the symmetry algebra reduces to dimensions $2$ we have the
respective normal forms
 $$
y''=\psi(p)\qquad\text{ and }\qquad y''=\psi(p)/x.
 $$
It is important that for singular strata the classification shall be
finer. This is almost obvious for projective connections (cubic
$\psi$), but for metric projective connections this is already
substantial, see \cite{BMM}.

The case $\dim\op{Sym}(\E)=1$ has only one quite general form
$y''=\psi(x,p)$ with an obvious counterpart for projective
connections (for the metric case see \cite{Ma}).

 \begin{rk}
When the transformation pseudogroup reduces from point to
fiber-preserving (triangular) transformations of
$J^0\R(x)=\R^2(x,y)$, the algebra of differential invariants grows,
but the symmetric cases change completely. In particular, the
symmetry algebra can have dimensions 6, 3, 2, 1 or 0 \cite{KSh,HK}.
 \end{rk}

Not much is known about the criteria for having the prescribed
dimension of the symmetry algebra, except for the corollary of
Lie-Liouville-Tresse theorem: $\dim\op{Sym}(\E)=8$ iff $L=0$.

In \cite{Tr$_2$} the following was claimed (we translate it to the
language of absolute differential invariants):
 \begin{itemize}
 \item[$\diamond$]
The symmetry algebra is 3-dimensional iff all the differential
invariants (on the equation) are constant.
 \item[$\diamond$]
The symmetry algebra is 2-dimensional iff the space of differential
invariants has functional rank 1, i.e. any two of them are
functionally dependent (Jacobian vanishes).
 \item[$\diamond$]
The symmetry algebra is 1-dimensional iff the space of differential
invariants has functional rank 2 (all $3\times3$ Jacobians vanish).
 \end{itemize}
This (unproved in Tresse, but correct statement) is however
inefficient, since checking all invariants is not practically
possible. Here is an improvement:
 \begin{theorem}
The above claims hold true if we restrict to the basic absolute
differential invariants of order $\le6$ described in \S\ref{S22}.
 \end{theorem}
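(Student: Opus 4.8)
The plan is to reduce the three infinite-family criteria (stated for all differential invariants) to finite checks involving only the order-$\le 6$ generators, by exploiting the fact that every higher invariant is obtained from these by applying the invariant derivations $\nabla_x,\nabla_y,\nabla_p$. First I would recall from Theorem~\ref{thm3} and the discussion of \S\ref{S32} that the whole algebra $\mathcal I$, restricted to a fixed equation $\E$, is generated as a differential algebra by $\bar H_{10}^\E,\bar H_{01}^\E,\bar K^\E$ together with $\bar\O^{6\,\E},\bar\O^{5\,\E}_{10},\bar\O^{4\,\E}_{20}$ (and their companions determined by the syzygies), under the directional derivatives induced by $\nabla_x,\nabla_y,\nabla_p$. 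The key structural point is that once the functional rank of the order-$6$ collection is fixed, applying invariant derivations cannot raise it: each $\nabla_j(J^\E)$ is again a function on $\R^3\simeq\E$, so its differential lies in the span already occupied, modulo the coefficients appearing in the commutator relations — which are themselves among the listed invariants. Hence the functional rank of the \emph{entire} algebra on $\E$ equals the functional rank of the finite collection of \S\ref{S22}.

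Second I would make this precise via the commutator relations for $[\nabla_i,\nabla_j]$ displayed just before Theorem~2. These express the failure of the $\nabla_j$ to commute purely in terms of $\bar H_{10},\bar H_{01},\bar K,\bar\O^6,\bar\O^5_{10},\bar\O^4_{20}$. Differentiating any order-$6$ invariant and using these relations, one sees inductively that $d(\nabla_{j_1}\cdots\nabla_{j_m}J^\E)$ always lies in the module generated over $C^\infty(\E)$ by the differentials of the basic order-$\le6$ invariants. Therefore the condition ``all differential invariants are constant on $\E$'' is equivalent to ``all basic order-$\le6$ invariants are constant'', since constancy of the generators forces constancy of every derived invariant (all the coefficient-invariants in the relations being then constants too, and $\nabla_j$ of a constant being zero). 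This settles the $3$-dimensional symmetry case.

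Third, for the rank-$1$ and rank-$2$ cases I would argue analogously but track Jacobians. The claim is that the functional rank of $\{\text{all invariants}\}$ on $\E$ equals the functional rank of the finite collection~(\ref{lst}), i.e.\ of the order-$\le6$ basic invariants. One inclusion is trivial. For the reverse, I would show that if the order-$\le6$ invariants have rank $s\in\{0,1,2\}$, so that $s$ of them serve as coordinates on the image submanifold, then every $\nabla_j$ restricted to $\E$ is a derivation tangent to the level sets of those $s$ functions up to combinations already expressible through the generators; hence no higher invariant can be functionally independent of the chosen $s$. This is exactly the mechanism of Theorem~\ref{thm3} and Theorem~\ref{thm3+}, where independence of $\bar H_{10}^\E,\bar H_{01}^\E,\bar K^\E$ already determined everything: in the degenerate strata the same ``expressibility through a maximal independent subset'' holds, now realized within the order-$\le6$ list. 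Combining with the relation (via $L$) between symmetry dimension and functional rank — $\dim\op{Sym}(\E)=8-\dim\mathcal O$ heuristically, with the precise correspondence $\dim\op{Sym}=3,2,1,0$ matching rank $0,1,2,3$ — yields the stated equivalences restricted to the finite list.

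The main obstacle I anticipate is the reverse inclusion for the rank statements: proving that functional independence genuinely cannot emerge at order $>6$, i.e.\ that no prolongation produces a new independent invariant once the order-$6$ collection has dropped rank. The delicate point is that the coefficient-invariants in the $[\nabla_i,\nabla_j]$ relations are themselves order-$6$ objects, so I must verify that their vanishing of rank is compatible and does not secretly encode a hidden independent direction surfacing only after several differentiations. I would handle this by a careful induction on order, using the explicit syzygies of \S\ref{S22} to show that the differential of any order-$(6+m)$ invariant is an $C^\infty(\E)$-linear combination of differentials of order-$\le6$ invariants, so that rank is preserved under $\nabla_j$; Sard-type genericity (as invoked in the proof of Theorem~\ref{thm3}) then disposes of the measure-zero singular loci where the inductive step might degenerate.
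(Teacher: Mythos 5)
Your reduction of the $\dim\op{Sym}(\E)=3$ case is sound: since the invariant derivations restrict to directional derivatives on $\E$, constancy of the order-$\le6$ generators forces $(\nabla_jJ)^\E=\nabla_j^\E(J^\E)=0$, so every invariant obtained from them by derivations and functional operations is constant on $\E$, and Tresse's original claim then applies. The gap is in the rank-$1$ and rank-$2$ cases, and it sits exactly at the step you flagged as the main obstacle: the assertion that invariant derivations cannot raise the functional rank of the restricted invariants, which you propose to prove by induction on order using the syzygies of \S\ref{S22}. That induction does not go through. The syzygies only express the commutators $[\nabla_i,\nabla_j]$ and certain special combinations (such as $\Delta_p\O^5_{10}-\Delta_x\O^6$) through order-$\le6$ invariants; they do not express the individual derivatives $\nabla_j\bar\O^6$, $\nabla_j\bar\O^5_{10},\dots$ through the order-$\le6$ collection, and they cannot, since on jet space genuinely new invariants appear in every order $k\ge5$ (the count $\tfrac12 k(k-1)-4$ of \S\ref{S31}). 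Nor is there a formal reason after restriction to $\E$: if the order-$\le6$ invariants have rank $s<3$, so that each is a function of $s$ of them, say $J^\E=f(t_1,\dots,t_s)$, then $\nabla_j^\E(J^\E)=\sum_i f_{t_i}\,\nabla_j^\E(t_i)$, and nothing in the syzygies prevents $\nabla_j^\E(t_i)$ from being functionally independent of $t_1,\dots,t_s$; a frame on a $3$-manifold whose structure functions have rank $s$ can in general have derived invariants of strictly larger rank. So the engine of your proof, "rank is preserved under $\nabla_j$ because of the syzygies", is unjustified, and your appeal to Sard-type genericity does not repair it (in Theorem \ref{thm3} that argument concerns values of compatibility conditions, not rank-degeneracy loci).

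What actually closes the argument is the equivalence machinery that you mention only in passing. Apply Theorem \ref{thm3}/\ref{thm3+}, extended to the degenerate ranks as in the discussion of the other regular classes following Theorem \ref{thm3}, to the pair $\E_1=\E_2=\E$: the Lie equation (\ref{LieEq}) is of finite type, and its compatibility conditions amount to the coincidence of the restricted invariants, all of which are controlled by the order-$\le6$ collection (\ref{lst}); solvability therefore produces, for any two nearby points of $\E$ in one level set of the invariant map $\E\to\R^{11}$, a point symmetry of $\E$ carrying one to the other. Hence the fibers of that map are locally symmetry orbits, every differential invariant (of any order) is constant on them, so the rank of the full algebra equals the rank $s$ of the finite collection and $\dim\op{Sym}(\E)=3-s$. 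This orbit-transitivity is precisely the parenthetical assertion in \S 3.3 that when collection (\ref{lst}) has rank $2$ or $1$, the space of \emph{all} differential invariants has the same rank; it is a geometric fact, strictly stronger than what generic rank-stabilization arguments yield (those would only guarantee stabilization after further differentiations), and it cannot be replaced by the formal induction you propose.
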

What is the minimal collection of differential invariants answering
the above question is seemingly unknown (except for the case
$\dim\op{Sym}(\E)=3$ for $I=0$ handled in \cite{R}).


\appendix
\section{Appendix: Another approach}
 \begin{center}{\bf B.\,Kruglikov, V.\,Lychagin}
 \end{center}

{\bf 1.} Consider the stabilizer $\mathfrak{l}_8\subset\mathfrak{g}$
of a point $(x,y)\in \R^2(x,y)=J^0\R(x)$. We can choose coordinates
so that $x=y=0$. The vector fields generating this subalgebra of
vector fields of $F_{x,y}=\R^2(p,u)=\pi_{2,0}^{-1}(x,y)$ are
 $$
\mathfrak{l}_8=\langle
 \p_p,\ \p_u,\ p\,\p_p,\ u\,\p_u,\ p\,\p_u,\
 p^2\p_u,\ p^3\p_u,\ p^2\p_p+3p\,u\,\p_u
\rangle
 $$
This is an 8-dimensional Lie algebra with Levi decomposition
$\mathcal{R}_5\ltimes\op{sl}_2$, where $\mathcal{R}_5$ is the
radical, which is a solvable Lie algebra with 4-dimensional
(commutative) nil-radical.

In $F_{x,y}$ the 2nd order equation $\E$ is a
curve\footnote{depending parametrically on $x,y$.} $u=f(p)$. Since
in equivalence problem we can transform one base point to another by
a point transformation (any point to any if the equation possesses a
2-dimensional symmetry group, transitive on the base), the
equivalence problem is reduced to the equivalence of curves on the
plane $\R^2(p,u)$ with respect to the Lie group $\mathfrak{l}_8$
action.

The action lifts to the spaces $J^k\R(p)=\R^{k+2}(p,u,u_p,\dots)$
and is transitive up to 3rd jets. The first singular orbit appears
in the space $J^4\R(p)$ and is $\mathcal{S}_1=\{G_4=u^4=0\}$ (we
continue to use the same notations as above, so that
$u^4=u_{pppp}$). The next singular orbit (different from
prolongation of this one) appears in the space $J^6\R(p)$ and is
$\mathcal{S}_2=\{G_6=5u^4u^6-6u^5\cdot u^5=0\}$.

Notice that the second equation belongs to the prolongation of the
first: $\mathcal{S}_2\subset\mathcal{S}_1^{(2)}$ (so it is a
sub-singular orbit). The functions $G_4,G_6$ are relative
differential invariants. In this case the weights can be chosen via
cocycles $C_\xi^r=\op{div}_{\oo_0}(\xi)-\frac12\op{div}_{\O}(\xi_1)$
and $C_\xi^s=\frac12\op{div}_{\O}(\xi_1)$, where $\oo_0=dp\we du$,
$\O=-\oo\we d\oo=dp\we du\we du_p$ (for $\oo=du-u_p\,dp$) and
$\xi=A\p_p+B\p_u$,
$\xi_1=X_f=A\p_p+B\p_u+(\D_p(B)-\p_p(A)u_p)\p_{u_p}$,
$\hat\xi=X_f^{(\infty)}$ with
 $$
f=B-A\,u_p,\quad A=a_0+a_1p+a_2p^2,\quad
B=b_0+b_1p+b_2p^2+b_3p^3+b_4u+3a_2p\,u.
 $$
Denoting $\mathcal{R}^{r,s}=\{\psi\in
C^\infty(J^\infty\R):\hat\xi(\psi)=-(r\,C_\xi^r+s\,C_\xi^s)\psi\}$,
we get\footnote{Note that since the group is changed the grading is
changed as well. In particular $G_4$, which formally coincides with
$I$ of section \S\ref{S21}, has a different grading.}:
 $$
G_4\in\mathcal{R}^{4,-1},\ G_6\in\mathcal{R}^{10,-2}.
 $$

The relative invariant derivative here equals
 $$
\diamondsuit_p=\D_p-\frac{2r+3s}5\frac{u^5}{u^4}:\mathcal{R}^{r,s}\to\mathcal{R}^{r+1,s}.
 $$
It acts trivially on $G_4$, but from its action on $G_6$ we can
extract an absolute invariant. Indeed since
$G_4/\sqrt{G_6}\in\mathcal{R}^{-1,0}$, we have:
$\diamondsuit_p(G_4/\sqrt{G_6})\in\mathcal{R}^{0,0}=\mathcal{I}$ and
the latter expression is non-zero.

Actually the action of our 8-dimensional group has open orbits through
generic points on $J^6\R(p)$ and the first absolute differential
invariant appear at order 7 and equals\footnote{Superscript after
brackets means the power, while the others are indices.}
 $$
I_7=\frac{25(u^4)^2u^7+84(u^5)^3-105u^4u^5u^6}{(G_6)^{3/2}},
 $$
which coincides with $-10\,\diamondsuit_p(G_4/\sqrt{G_6})$.

In each higher order we get 1 new differential invariant. They
determine Tresse derivative (see \cite{KL$_1$}), but we can obtain
the absolute invariant derivative directly:
 $$
\square_p=\left.\frac{G_4}{\sqrt{G_6}}\diamondsuit_p\right|_{r=s=0}
=\frac{u^4}{\sqrt{5u^4u^6-6u^5\cdot
u^5}}\,\D_p:\mathcal{I}\to\mathcal{I}.
 $$
This can be expressed via invariants of \S\ref{S32} as
$\frac1{\sqrt{5\O^6}}\nabla_p$.

Thus on the generic stratum every differential invariant is
(micro-locally) a function of the invariants
$I_7,\square_p(I_7),\square_p^2(I_7),\dots$ of orders $7,8,9,\dots$

{\bf 2.} It is easy to see that the class of cubic curves $u=Q_3(p)$
is invariant with respect to the Lie group
$\mathfrak{L}_8=\op{Exp}(\mathfrak{l}_8)$ action. This 4-dimensional
space forms a singular orbit, on which $\mathfrak{L}_8$ acts
transitively.

Another singular orbit is $\mathcal{S}_2$, which is a 6-dimensional
manifold of curves $u=(a_0+a_1p+a_2p^2+a_3p^3)+b/(p-c)$, and
$\mathfrak{L}_8$ acts transitively there on the generic stratum. The
singular stratum is given by the equation $b=0$ and coincides with
the previous stratum $\mathcal{S}_1$.

{\bf 3.} Consider the stabilizer
$\mathfrak{l}_6=\op{St}_0\subset\mathfrak{h}=\mathfrak{g}_2$ of a
point $x_2=(x,y,p,u)\in J^0\R^3(x,y,p)=J^2\R(x)$. Since the
pseudogroup $\mathfrak{g}_2$ acts transitively on $J^2\R(x)$, choice
of $x_2$ is not essential, in particular we can take a coordinate
representative $(x,y,0,0)$. This Lie algebra is generated by
(prolongation of) the fields
 $$
\mathfrak{l}_6=\langle
 p\,\p_p,\ u\,\p_u,\ p\,\p_u,\
 p^2\p_u,\ p^3\p_u,\ p^2\p_p+3p\,u\,\p_u
\rangle
 $$
and is solvable (with 4-dimensional nil-radical of length 2). Thus
investigation of its invariants is easier thanks to S.Lie quadrature theorem.

Moreover we can continue and take the stabilizer
$\op{St}_3\subset\mathfrak{g}_5$ of a point in $J^3\R^3(x,y,p)$,
where the action of $\mathfrak{h}$ is still transitive. This
stabilizer is a trivial 1-dimensional extension of the 2D solvable
Lie algebra.

{\bf 4.} With all these approaches we get enough invariants to
pursue classification in generic case (and even to deal with
singular orbits). Indeed, we get a subalgebra $\mathfrak{V}$ in the
algebra of all differential invariants $\mathcal{I}$, which we can
restrict to the equation. Provided that there are invariants in
$\mathfrak{V}$ independent, on our 2nd order ODE $\E$, we solve the
equivalence problem.

In general to restore the whole algebra $\mathcal{I}$ we must add to
this vertical invariants algebra $\mathfrak{V}$ invariant
derivatives $\nabla_x$, $\nabla_y$. This is similar to Liouville
approach for cubic 2nd order ODEs (retrospectively after \cite{C} --
projective connections), who found in \cite{Lio} only a subalgebra
of (relative) differential invariants (the second relevant invariant
derivative
$\nabla_2=\Psi_2\D_x-\Psi_1\D_y+...:\mathcal{R}^r\to\mathcal{R}^{r+4}$
and the corresponding part of differential invariants was not
established).


 \vspace{-10pt} \hspace{-20pt} {\hbox to 12cm{ \hrulefill }}
\vspace{-1pt}

{\footnotesize \hspace{-10pt} Institute of Mathematics and
Statistics, University of Troms\o, Troms\o\ 90-37, Norway.

\hspace{-10pt} E-mails: \quad kruglikov\verb"@"math.uit.no, \quad
lychagin\verb"@"math.uit.no.} \vspace{-1pt}

\end{document}